\documentclass[12pt]{amsart}
%%%%%%%%%%%%%%%%%%%%%%%%%%%%%%%%%%%%%%%%%%%%%%%%%%%%%%%%%%%%%%%%%%%%%%%%%%%%%%%%%%%%%%%%%%%%%%%%%%%%%%%%%%%%%%%%%%%%%%%%%%%%%%%%%%%%%%%%%%%%%%%%%%%%%%%%%%%%%%%%%%%%%%%%%%%%%%%%%%%%%%%%%%%%%%%%%%%%%%%%%%%%%%%%%%%%%%%%%%%%%%%%%%%%%%%%%%%%%%%%%%%%%%%%%%%%
\usepackage{amssymb}
\usepackage{amsmath}
\usepackage[active]{srcltx}
\usepackage{t1enc}
\usepackage[latin2]{inputenc}
\usepackage{verbatim}
\usepackage{amsmath,amsfonts,amssymb,amsthm}
\usepackage[mathcal]{eucal}
\usepackage{enumerate}
\usepackage[centertags]{amsmath}
\usepackage{graphics}

\setcounter{MaxMatrixCols}{10}
%TCIDATA{OutputFilter=Latex.dll}
%TCIDATA{Version=5.50.0.2890}
%TCIDATA{<META NAME="SaveForMode" CONTENT="1">}
%TCIDATA{BibliographyScheme=Manual}
%TCIDATA{Created=Tue Oct 05 19:52:08 2010}
%TCIDATA{LastRevised=Thursday, May 30, 2013 23:42:13}
%TCIDATA{<META NAME="GraphicsSave" CONTENT="32">}
\setlength{\oddsidemargin}{-2truemm}
\setlength{\evensidemargin}{-2truemm}
\setlength{\topmargin}{-1.6truecm} \setlength{\textheight}{23.5cm}
\textwidth16.5cm
\parskip 1.5mm
\parindent 3mm
\setlength{\abovedisplayskip}{10pt plus 2.5pt minus 7.5pt}
\setlength{\belowdisplayskip}{10pt plus 2.5pt minus 7.5pt}
\newtheorem{theorem}{Theorem}

\newtheorem{lemma}{Lemma}

\newtheorem{corollary}{Corollary}

\newtheorem{Proposition}{Proposition}

\begin{document}
\author{Davit Baramidze, Nato Gogolashvili and Nato Nadirashvili}
\title[$T$ means]{Convergence of $T$ means with respect to Vilenkin systems of integrable functions}
\address{D. Baramidze, The University of Georgia, School of science and
	technology, 77a Merab Kostava St, Tbilisi 0128, Georgia and Department of
	Computer Science and Computational Engineering, UiT - The Arctic University
	of Norway, P.O. Box 385, N-8505, Narvik, Norway.}
\email{davit.baramidze@ug.edu.ge }
\address{N. Gogolashvili, The University of Georgia, School of science and
	technology, 77a Merab Kostava St, Tbilisi 0128, Georgia and Department of
	Computer Science and Computational Engineering, UiT - The Arctic University
	of Norway, P.O. Box 385, N-8505, Narvik, Norway.}
\email{n.gogolashvili@ug.edu.ge }
\address{N. Nadirashvili, The University of Georgia, School of science and
	technology, 77a Merab Kostava St, Tbilisi 0128, Georgia.}
\email{nato.nadirashvili@gmail.com  }
\thanks{The research was supported by Shota Rustaveli National Science
	Foundation grant no. PHDF-21-1702.}
%\thanks{The research was supported by Shota Rustaveli National Science Foundation grant no.FR-19-676.}
\date{}
\maketitle

\begin{abstract}
In this paper we derive converge of $T$ means of Vilenkin-Fourier
series with monotone coefficients of integrable functions in Lebesgue and
Vilinkin-Lebesgue points. Moreover, we discuss pointwise and norm convergence in $L_p$ norms of such $T$ means.
\end{abstract}

\bigskip \textbf{2000 Mathematics Subject Classification.} 42C10, 42B25.

\textbf{Key words and phrases:} Vilenkin systems, Vilenkin groups, $T$ means, N\"orlund  means,  a.e. convergence, Lebesgue points, Vilenkin-Lebesgue points.

\section{Introduction}

The definitions and notations used in this introduction can be
found in our next Section. 

It is well-known (see e.g. the book \cite{sws}) that there exists an
absolute constant $c_{p},$ depending only on $p,$ such that 
\begin{equation*}
\left\Vert S_{n}f\right\Vert _{p}\leq c_{p}\left\Vert f\right\Vert _{p},%
\text{ \ when \ }p>1.
\end{equation*}%
On the other hand, (for details see \cite{BPT,BPT1,PTW3,tep7,tep9,tut1}) boundedness does not hold for $p=1.$ The analogue of Carleson's theorem for Walsh system was proved by Billard 
\cite{Billard1967} for $p=2$ and by Sjölin \cite{sj1} for $1 <p<\infty$,
while for bounded Vilenkin systems by Gosselin \cite{goles}. For
Walsh-Fourier series, Schipp \cite{s1,s2,sws} gave a proof by using methods
of martingale theory. A similar proof for Vilenkin-Fourier series can be
found in Schipp and Weisz \cite{s3,wk}. In each proof, they show that the
maximal operator of the partial sums is bounded on $L_p$, i.e. there exists
an absolute constant $c_p$ such that 
\begin{equation*}
\left\Vert S^{\ast }f\right\Vert_p\leq c_p\left\Vert f\right\Vert_p,\text{ \
	when \ }f\in L_p,\text{ \ } p>1.
\end{equation*}
Hence, if $f\in L_p(G_m),$ where $p>1,$ then 
$
S_{n}f\to f, \ \ \text{a.e. on } \ \ G_m.
$
Stein \cite{steindiv} constructed the integrable function whose
Vilenkin-Fourier (Walsh-Fourier) series diverges almost everywhere. In \cite%
{sws} was proved that there exists an integrable function whose
Walsh-Fourier series diverges everywhere. a.e convergence of subsequences of Vilenkin-Fourier series was considered in 
\cite{bnpt}, where was used methods of martingale Hardy spaces. 

If we consider the following restricted maximal operator $\widetilde{S}%
_{\#}^{\ast}f:=\sup_{n\in\mathbb{N}}\left\vert S_{M_n}f\right\vert,$  we
have weak $(1,1)$ type inequality for $f\in L_1(G_m).$  
Hence, if $f\in L_1(G_m),$ then $S_{M_n}f\to f, \ \text{a.e. on } \  G_m.$
Moreover, for any integrable function it is known that a.e. point is
Lebesgue point and for any such point $x$ of integrable function $f$ we
have that 
\begin{equation}  \label{smnvl}
S_{M_n}f(x)\to f(x), \ \text{as} \ n\to\infty, \ \text{for any Lebesgue point} \ x
\ \text{of } \ f\in L_1(G_m).
\end{equation}

In the one-dimensional case Yano \cite{Yano} proved that 
\begin{equation*}
\left\Vert \sigma _{n}f-f\right\Vert _{p}\rightarrow 0,\text{ \ \ \ when \ \
	\ \ }n\rightarrow \infty ,\text{ \ }(f\in L_p(G_m),\text{ \ }1\leq p\leq
\infty ).
\end{equation*}

If we consider the maximal operator of F\'ejer means
\begin{eqnarray*}
\sigma^{\ast}f:=\sup_{n\in\mathbb{N}}\left\vert \sigma_{n}f\right\vert, 
\end{eqnarray*}
then 
\begin{eqnarray*}
	\lambda \mu\left\{ \sigma ^{\ast}f>\lambda \right\} \leq c\left\Vert
	f\right\Vert_{1}, \ \ \ f\in L_1(G_m), \ \ \lambda>0.
\end{eqnarray*}
This result can be found in Zygmund \cite{Zy} for the trigonometric series,
in Schipp \cite{Sc}  and \cite{GNT,pt,PTT,tep1,tep10,tep11}) for Walsh series and in P\'al, Simon \cite{PS} for bounded Vilenkin series (see also Weisz \cite{We1,We2}). The boundedness does not hold from Lebesgue space $L_1(G_m)$ to the space $
L_1(G_m)$. The $\text{weak}-(1,1)$ type inequality  follows that for any $f\in
L_1(G_m),$ 
\begin{equation*}
\sigma_nf(x)\to f(x), \ \ \ \text{a.e., as} \ \ \ n\to\infty.
\end{equation*}
Moreover, in \cite{goggog}  (see also \cite{Ga2}) was proved that for any integrable function it is
known that a.e. point is Vilenkin-Lebesgue points and for any such point $x$
of integrable function $f$ we have that $$\sigma_nf(x)\to f(x), \ \ \text{as}
\ \ n\to\infty.$$

M\'oricz and Siddiqi \cite{Mor} investigate  approximation properties of
some special N\"orlund means of Walsh-Fourier series of $L_{p}$ functions in
norm. Similar results for the two-dimensional case can be found in Nagy \cite%
{nagy,n}, Nagy and Tephnadze \cite{NT1,NT2,NT3,NT4}, Gogolashvili and Tephnadze \cite{GT1,GT2} (see also \cite{BPTW}, \cite{MPT}). Approximation properties of general summability methods can be found
in \cite{BN,BNT}. Fridli, Manchanda and Siddiqi \cite{FMS} improved and
extended results of M\'oricz and Siddiqi \cite{Mor} to Martingale Hardy
spaces. The a.e. convergence of N\"orlund means of Vilenkin-Fourier series with
monotone coefficients of $f\in L_1$ was proved in \cite{PTW} (see also \cite{PTW2}).
In \cite{tut3} was proved that the maximal operators of $T$ means  $T^{\ast }$ defined by
$T^{\ast }f:=\sup_{n\in \mathbb{N}}\left\vert T_{n}f\right\vert$
either with non-increasing coefficients, or non-decreasing sequence satisfying condition 
\begin{equation}
\frac{q_{n-1}}{Q_{n}}=O\left( \frac{1}{n}\right) ,\text{ \ \ as \ \ }\
n\rightarrow \infty   \label{fn01}
\end{equation}
are bounded from the Hardy space $H_{1/2}$ to the space $weak-L_{1/2}$. Moreover, there exists a martingale and such $T $ means for which boundedness does not hold from the Hardy space $H_{p}$ to the space $L_{p}$  when $0<p\leq 1/2.$

One of the most well-known mean of $T$ means is the Riesz summability. In \cite{tep6} (see also \cite{LPTT}) it was proved that the maximal operator of Riesz logarithmic means 
\begin{eqnarray*}
R^{\ast}f:=\sup_{n\in\mathbb{N}}\left\vert R_{n}f\right\vert
\end{eqnarray*}
is bounded from the Hardy space $H_{1/2}$ to the space $weak-L_{1/2}$ and is not bounded from $H_{p}$ to the space $L_{p},$ for $0<p\leq 1/2.$ There was also proved that Riesz summability has better properties than Fejér means. 

In this paper we derive convergence of $T$ means of Vilenkin-Fourier
series with monotone coefficients of integrable functions in Lebesgue and
Vilinkin-Lebesgue points.

This paper is organized as follows: In order not to disturb our discussions
later on some definitions and notations are presented in Section 2. For the
proofs of the main results we need some auxiliary Lemmas, some of them are
new and of independent interest. These results are presented in Section 3.
The main results and some of its consequences and detailed proofs  are given in Section 4.

\section{Definitions and Notation}

Denote by $\mathbb{N}_{+}$ the set of the positive integers, $\mathbb{N}:=\mathbb{N}_{+}\cup \{0\}.$ Let $m:=(m_{0,}$ $m_{1},...)$ be a sequence of the positive
integers not less than 2. Denote by 
\begin{equation*}
Z_{m_{k}}:=\{0,1,...,m_{k}-1\}
\end{equation*}
the additive group of integers modulo $m_{k}$.

Define the Vilenkin group $G_{m}$ as the complete direct product of the groups $
Z_{m_{i}}$ with the product of the discrete topologies of $Z_{m_{j}}`$s (for details see \cite{Vi}). T In this paper we discuss bounded Vilenkin groups, i.e. the case
when $\sup_{n}m_{n}<\infty .$ The direct product $\mu $ of the measures 
$
\mu _{k}\left( \{j\}\right) :=1/m_{k}\text{ \  }(j\in Z_{m_{k}})
$
is the Haar measure on $G_m$ with $\mu \left( G_{m}\right) =1.$
The elements of $G_{m}$ are represented by sequences 
\begin{equation*}
x:=\left( x_{0},x_{1},...,x_{j},...\right) ,\ \left( x_{j}\in
Z_{m_{j}}\right).
\end{equation*}

It is easy to give a basis for the neighborhoods of $G_{m}:$ 
\begin{equation*}
I_{0}\left( x\right) :=G_{m},\text{ \ }I_{n}(x):=\{y\in G_{m}\mid
y_{0}=x_{0},...,y_{n-1}=x_{n-1}\}, \ \text{ where } \ x\in G_{m}, \ n\in \mathbb{N}.
\end{equation*}

If we define the so-called generalized number system based on $m$
in the following way: 
\begin{equation*}
M_{0}:=1,\ M_{k+1}:=m_{k}M_{k}\,\,\,\ \ (k\in \mathbb{N}),
\end{equation*}%
then every $n\in \mathbb{N}$ can be uniquely expressed as $n=\sum_{j=0}^{\infty }n_{j}M_{j},$ where 
$n_{j}\in Z_{m_{j}}$ $(j\in \mathbb{N}_{+})$ and only a finite number of $n_{j}`$s differ from zero.

We introduce on $G_{m}$ an orthonormal system which is called the Vilenkin
system. First, we define  complex-valued function 
$r_{k}\left(x\right) :G_{m}\rightarrow \mathbb{C},$ the generalized Rademacher functions, by
\begin{equation*}
r_{k}\left( x\right) :=\exp \left( 2\pi ix_{k}/m_{k}\right) ,\text{ }\left(
i^{2}=-1,x\in G_{m},\text{ }k\in \mathbb{N}\right) .
\end{equation*}

Next, we define the Vilenkin system 
$\,\,\,\psi :=(\psi _{n}:n\in \mathbb{N})$ on $G_{m}$ by: 
\begin{equation*}
\psi _{n}(x):=\prod\limits_{k=0}^{\infty }r_{k}^{n_{k}}\left( x\right)
,\,\,\ \ \,\left( n\in \mathbb{N}\right) .
\end{equation*}

Specifically, we call this system the Walsh-Paley system when $m\equiv 2.$

The norms (or quasi-norms) of the spaces $L_{p}(G_{m})$ and $%
weak-L_{p}\left( G_{m}\right) $ $\left( 0<p<\infty \right) $ are
respectively defined by 
\begin{equation*}
\left\Vert f\right\Vert _{p}^{p}:=\int_{G_{m}}\left\vert f\right\vert
^{p}d\mu ,\text{ }\left\Vert f\right\Vert _{weak-L_{p}}^{p}:=\underset{%
\lambda >0}{\sup }\lambda ^{p}\mu \left( f>\lambda \right) <+\infty .
\end{equation*}%

The Vilenkin system is orthonormal and complete in $L_{2}\left( G_{m}\right) 
$ (see \cite{Vi}).

Now, we introduce analogues of the usual definitions in Fourier-analysis. If 
$f\in L_{1}\left( G_{m}\right) $ we can define Fourier coefficients, partial
sums and Dirichlet kernels with respect to the Vilenkin system in the usual manner: 
\begin{equation*}
\widehat{f}\left( n\right) :=\int_{G_{m}}f\overline{\psi }_{n}d\mu,\ \ \ \
\ \ \
S_{n}f:=\sum_{k=0}^{n-1}\widehat{f}\left( k\right) \psi _{k},\text{ \ \ }%
D_{n}:=\sum_{k=0}^{n-1}\psi _{k\text{ }},\text{ \ \ }\left( n\in 
\mathbb{N}_{+}\right).
\end{equation*}

%It is well known that if $n\in\mathbb{N},$ then
%\begin{equation} \label{8dn}
%D_{M_n}\left(x\right)=\left\{ \begin{array}{ll} M_n, & x\in I_n, \\
%0, & x\notin I_n. \end{array} \right.
%\end{equation}
%Moreover, if  $n=\sum_{i=0}^{\infty}n_iM_i,$ and $1\leq s_n\leq m_n-1,$ then we have the following identity: 
%\begin{equation} \label{9dn}
%D_n=\psi_n\left(\sum_{j=0}^{\infty}D_{M_j}\sum_{k=m_j-n_j}^{m_j-1}r_j^k\right),
%\end{equation}

Recall that 
%\begin{equation*}
%D_{M_{n}}\left( x\right) =\left\{
%\begin{array}{ll}
%M_{n}, & \text{if\thinspace \thinspace \thinspace }x\in I_{n}, \\
%0, & \text{if}\,\,x\notin I_{n}.%
%\end{array}%
%\right.  \label{1dn}
%\end{equation*}
\begin{eqnarray}  \label{dn22}
&&\int_{G_{m}}D_n(x)dx=1, \\
&&D_{M_n-j}(x) =D_{M_n}(x)-\psi_{M_n-1}(x)\overline{D}_j(x), \ j<M_n. \label{dn23}
\end{eqnarray}

The convolution of two functions $f,g\in L_{1}(G_m)$ is defined by 
\begin{equation*}
\left( f\ast g\right) \left( x\right) :=\int_{G_m}f\left( x-t\right) g\left(
t\right) dt\text{ \ \ }\left( x\in G_m\right).
\end{equation*}%
It is easy to see that if $f\in L_{p}\left( G_m\right) ,$ $g\in  L_{1}\left(
G_m\right) $ and $1\leq p<\infty .$ Then $f\ast g\in  L_{p}\left( G_m\right) 
$ and 
\begin{equation}  \label{concond}
\left\Vert f\ast g\right\Vert_{p}\leq \left\Vert f\right\Vert_{p}\left\Vert
g\right\Vert _{1}.
\end{equation}

Let $\{q_{k}:k\geq 0\}$ be a sequence of nonnegative numbers. The $n$-th Nörlund mean $t_{n}$ for a Fourier series of $f$ \ is defined by 
\begin{equation} \label{nor0}
t_{n}f=\frac{1}{Q_{n}}\overset{n}{\underset{k=1}{\sum }}q_{n-k}S_{k}f, \ \ \ \text{where} \ \ \ Q_{n}:=\sum_{k=0}^{n-1}q_{k}.
\end{equation}%

It is obvious that  \  \
$$
t_nf\left(x\right)=\underset{G_m}{\int}f\left(t\right)F_n\left(x-t\right) d\mu\left(t\right),
\ \ \
\text{where} \ \  \	F_n:=\frac{1}{Q_n}\overset{n-1}{\underset{k=0}{\sum }}q_{k}D_k$$ 
 is called  $T$ kernel.

\begin{Proposition} \label{corollary3n}
	Let $\{q_k:k\in \mathbb{N}\}$ be a sequence of non-increasing numbers. Then, for any $n,N\in \mathbb{N_+}$,
	\begin{eqnarray} \label{1.71alpha}
	&&\int_{G_m} F_{M_n}(x) d\mu (x)=1, \\
	&&\sup_{n\in\mathbb{N}}\int_{G_m}\left\vert F_{M_n}(x)\right\vert d\mu(x)\leq c<\infty,\label{1.72alpha} \\
	&&\sup_{n\in\mathbb{N}}\int_{G_m \backslash I_N}\left\vert F_{M_n}(x)\right\vert d\mu (x)\rightarrow  0, \ \ \text{as} \ \ n\rightarrow  \infty, \label{1.73alpha}
	\end{eqnarray}
\end{Proposition}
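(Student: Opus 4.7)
Claim (\ref{1.71alpha}) is immediate from linearity: integrating the defining formula $F_{M_n}=\frac{1}{Q_{M_n}}\sum_{k=0}^{M_n-1}q_kD_k$ termwise and applying (\ref{dn22}) gives
\[
\int_{G_m} F_{M_n}\,d\mu=\frac{1}{Q_{M_n}}\sum_{k=0}^{M_n-1}q_k\int_{G_m}D_k\,d\mu=\frac{Q_{M_n}}{Q_{M_n}}=1.
\]

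For (\ref{1.72alpha}) and (\ref{1.73alpha}) the natural strategy is Abel summation in the $k$-variable. Writing $\mathcal{K}_m:=\frac{1}{m}\sum_{j=0}^{m-1}D_j$ for the Fej\'er kernel and using that $q_k-q_{k+1}\ge 0$ by monotonicity, summation by parts gives
\[
\sum_{k=0}^{M_n-1}q_k D_k=\sum_{k=1}^{M_n-1}(q_{k-1}-q_k)\,k\,\mathcal{K}_k+q_{M_n-1}\,M_n\,\mathcal{K}_{M_n},
\]
and a telescoping check shows that the nonnegative coefficients on the right sum to exactly $Q_{M_n}$. Thus $F_{M_n}$ is a genuine convex combination of Fej\'er kernels of orders $1,\dots,M_n$. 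Since the uniform bound $\sup_m\|\mathcal{K}_m\|_1\le c$ is a classical property of the Vilenkin Fej\'er kernel, the triangle inequality delivers (\ref{1.72alpha}) at once.

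For (\ref{1.73alpha}) I would apply the same convex decomposition but integrate only over $G_m\setminus I_N$. The essential external input is the classical decay $\int_{G_m\setminus I_N}|\mathcal{K}_m|\,d\mu\to 0$ as $m\to\infty$, proved by decomposing $G_m\setminus I_N=\bigsqcup_{s=0}^{N-1}(I_s\setminus I_{s+1})$ and estimating $\mathcal{K}_m$ pointwise on each level set using (\ref{dn23}). Given $\varepsilon>0$, choose $m_0$ so that this tail integral is below $\varepsilon$ for $m\ge m_0$ and split the convex combination at the index $m_0$: the large-$k$ portion contributes at most $\varepsilon$ by convexity, while the small-$k$ portion is a finite sum bounded by $c(m_0)/Q_{M_n}$, which becomes negligible for large $n$ because $Q_{M_n}\to\infty$ (an implicit standing assumption on the monotone coefficients that makes the $T$-kernel an approximate identity).

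The main obstacle will be in step (\ref{1.73alpha}): the Abel decomposition reduces matters to the behavior of the Fej\'er kernel away from the origin, but this reduction is useful only once one has the tail decay $\int_{G_m\setminus I_N}|\mathcal{K}_m|\to 0$ for \emph{all} large indices $m$, not merely along the subsequence $m=M_j$ where the kernel collapses to a multiple of $\chi_{I_n}$. Establishing that general-index decay via the pointwise estimates on $I_s\setminus I_{s+1}$ is the technical core, and I would expect it to be isolated as one of the auxiliary lemmas announced for Section~3.
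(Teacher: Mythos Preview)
The paper does not actually prove Proposition~\ref{corollary3n}; it is listed without proof in Section~2 as a known background fact about N\"orlund kernels. What the paper \emph{does} prove is the parallel Lemma~\ref{lemma0nnT11} for the $T$-kernel $F^{-1}_n$, and that argument is exactly your Abel-summation-to-Fej\'er-kernels strategy, including the $\varepsilon$-splitting at a threshold index for the tail estimate; the Fej\'er-kernel facts you flag as the technical core are not new lemmas but are simply quoted as (\ref{fn4}) and (\ref{fn400}).

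One substantive slip: the formula you start from, $F_{M_n}=\frac{1}{Q_{M_n}}\sum_{k=0}^{M_n-1}q_kD_k$, is the $T$-kernel $F^{-1}_{M_n}$, not the N\"orlund kernel $F_{M_n}=\frac{1}{Q_{M_n}}\sum_{k=1}^{M_n}q_{M_n-k}D_k$ that Proposition~\ref{corollary3n} concerns. (The paper itself contains a typo displaying the same sum for both; Lemma~\ref{lemma0nnT121} makes the intended distinction clear.) For the genuine N\"orlund kernel with $\{q_k\}$ non-increasing, the running coefficient $k\mapsto q_{M_n-k}$ is non-\emph{decreasing}, so Abel summation yields differences of the opposite sign and you do \emph{not} obtain a convex combination of Fej\'er kernels. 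The fix is routine---work with absolute values and the total-variation bound $\sum_k|q_{M_n-k}-q_{M_n-k-1}|+q_0\le 2q_0$, exactly as in the paper's handling of Lemma~\ref{lemma0nnT11}---after which your argument for (\ref{1.72alpha}) and (\ref{1.73alpha}) goes through verbatim. Your caveat that $Q_{M_n}\to\infty$ is being used tacitly is well taken; the paper's own proof of Lemma~\ref{lemma0nnT11} relies on the same unstated regularity hypothesis.
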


Let $\{q_{k}:k\geq 0\}$ be a sequence of non-negative numbers. The $n$-th  $T$ means $T_n$ for a Fourier series of $f$ are defined by
\begin{equation} \label{nor}
T_nf:=\frac{1}{Q_n}\overset{n-1}{\underset{k=0}{\sum }}q_{k}S_kf, \ \ \ \text{where} \ \ \  Q_{n}:=\sum_{k=0}^{n-1}q_{k}.
\end{equation}

It is obvious that  \  \
$$
T_nf\left(x\right)=\underset{G_m}{\int}f\left(t\right)F^{-1}_n\left(x-t\right) d\mu\left(t\right),  \ \ \text{
where} \ \  	F^{-1}_n:=\frac{1}{Q_n}\overset{n-1}{\underset{k=0}{\sum }}q_{k}D_k$$ 
is called the $T$ kernel.

We always assume that $\{q_k:k\geq 0\}$ is a sequence of non-negative numbers and $q_0>0.$ Then the summability method (\ref{nor}) generated by $\{q_k:k\geq 0\}$ is regular if and only if 
$$\lim_{n\rightarrow\infty}Q_n=\infty.$$

It is easy to show that, for any real numbers  $a_1,\dots, a_m,$ $b_1,\dots, b_m$ and  $a_k=A_k-A_{k-1},$ $k=n, \dots,m,$  we have so called Abel transformation:
\begin{eqnarray*}
	\sum_{k=m} ^{n}a_k b_k 
	&=& A_n b_n -A_{m-1} b_m + \sum_{k=m} ^{n-1}A_{k} (b_k -b_{k+1}).
\end{eqnarray*}

For $a_j=A_j-A_{j-1}, \ j=1,...,n,$ if we invoke Abel transformations 
\begin{eqnarray}\label{abel1} \overset{n-1}{\underset{j=1}{\sum}}a_jb_j&=&A_{n-1}b_{n-1}-A_{0}b_1+\overset{n-2}{\underset{j=0}{\sum}}A_j(b_j-b_{j+1}),  \\ \label{abel2}
\overset{n-1}{\underset{j=M_N}{\sum}}a_jb_j&=&A_{n-1}b_{n-1}-A_{M_N-1}b_{M_N}+\overset{n-2}{\underset{j=M_N}{\sum}}A_j(b_j-b_{j+1}),
\end{eqnarray} 

For $b_j=q_j$, $a_j=1$ and $A_j=j$ for any $j=0,1,...,n$ we get the following identity:
\begin{eqnarray} \label{2b}
Q_n&=&\overset{n-1}{\underset{j=0}{\sum}}q_j=q_0+\overset{n-1}{\underset{j=1}{\sum}}q_j =q_0+\overset{n-2}{\underset{j=1}{\sum}}\left(q_{j}-q_{j+1}\right) j+q_{n-1}{(n-1)}, \\
\label{2b0}
\overset{n-1}{\underset{j=M_N}{\sum}}q_j &=&\overset{n-2}{\underset{j=M_N}{\sum}}\left(q_{j}-q_{j+1}\right) j+q_{n-1}{(n-1)}-(M_N-1)q_{M_N}.
\end{eqnarray}
Moreover, if  use  $D_0=K_0=0 \ \text{for any}  \ x\in G_m $
and invoke Abel transformations \eqref{abel1} and \eqref{abel2} for $b_j=q_j$, $a_j=D_j$ and $A_j=jK_j$ for any $j=0,1,...,n-1$ we get identities: 
\begin{eqnarray} 	\label{2cTmean}
&& \ \ \ F_n^{-1}=\frac{1}{Q_n}\overset{n-1}{\underset{j=0}{\sum }}q_{j}D_j=\frac{1}{Q_n}\left(\overset{n-2}{\underset{j=1}{\sum}}\left(q_j-q_{j+1}\right) jK_j+q_{n-1}(n-1)K_{n-1}\right),\\
\label{2c0Tmean} 
&&\frac{1}{Q_n}\overset{n-1}{\underset{j=M_N}{\sum }}q_jD_j\\ \notag
&=&\frac{1}{Q_n}\left(\overset{n-2}{\underset{j=M_N}{\sum}}\left(q_j-q_{j+1}\right) jK_j+q_{n-1}(n-1)K_{n-1}-q_{M_N}{(M_N-1)} K_{{M_N-1}}\right).
\end{eqnarray}
Analogously, if  use  $S_0f=\sigma_0f=0, \  \text{for any} \  x\in G_m $  and invoke Abel transformations \eqref{abel1} and \eqref{abel2} for $b_j=q_j$, $a_j=S_j$ and $A_j=j\sigma_j$ for any $j=0,1,...,n-1$ we get identities: 
\begin{eqnarray} \label{2cc2}
&&T_nf=\frac{1}{Q_n}\overset{n-1}{\underset{j=0}{\sum }}q_{j}S_jf=\frac{1}{Q_n}\left(\overset{n-2}{\underset{j=1}{\sum}}\left(q_j-q_{j+1}\right) j\sigma_jf+q_{n-1}(n-1)\sigma_{n-1}f\right),
\\	\label{2cc20} 
&&\frac{1}{Q_n}\overset{n-1}{\underset{j=M_N}{\sum }}q_jS_jf \\ \notag
&=&\frac{1}{Q_n}\left(\overset{n-2}{\underset{j=M_N}{\sum}}\left(q_j-q_{j+1}\right) j\sigma_jf+q_{n-1}(n-1)\sigma_{n-1}f-q_{M_N}{(M_N-1)} \sigma_{{M_N-1}}f\right).
\end{eqnarray}

If $q_k\equiv 1$ in \eqref{nor0} and \eqref{nor}  we respectively define the Fejér means $\sigma _{n}$ and Fejér Kernels $K_{n}$ as follows: 
\begin{equation*}
\sigma _{n}f:=\frac{1}{n}\sum_{k=1}^{n}S_{k}f\,,\text{ \ \ }K_{n}:=\frac{1}{n%
}\sum_{k=1}^{n}D_{k}.
\end{equation*}

It is well-known that (for details see \cite{AVD})
\begin{equation} \label{fn5}
n\left\vert K_{n}\right\vert \leq c\sum_{l=0}^{\left\vert n\right\vert
}M_{l}\left\vert K_{M_{l}}\right\vert  
\end{equation} 
and for any $n,N\in \mathbb{N_+}$,
\begin{eqnarray} \label{fn40}
&&\int_{G_m} K_n (x)d\mu(x)=1,\\
&& \label{fn4}
\sup_{n\in\mathbb{N}}\int_{G_m}\left\vert K_n(x)\right\vert d\mu(x)\leq c<\infty,\\
&& \label{fn400}
\sup_{n\in\mathbb{N}}\int_{G_m \backslash I_N}\left\vert K_n(x)\right\vert d\mu (x)\rightarrow  0, \ \ \text{as} \ \ n\rightarrow  \infty, 
\end{eqnarray}

The well-known example of N\"orlund summability is the so-called $\left(C,\alpha\right)$ mean (Ces\`aro means) for $0<\alpha<1,$ which are defined by
\begin{equation*}
\sigma_n^{\alpha}f:=\frac{1}{A_n^{\alpha}}\overset{n}{\underset{k=1}{
		\sum}}A_{n-k}^{\alpha-1}S_kf, \ \ \
\text{where } \ \ \
A_0^{\alpha}:=0,\qquad A_n^{\alpha}:=\frac{\left(\alpha+1\right)...\left(\alpha+n\right)}{n!}.
\end{equation*}

We also consider the "inverse" $\left(C,\alpha\right)$ means, which is an example of $T$ means:
\begin{equation*}
U_n^{\alpha}f:=\frac{1}{A_n^{\alpha}}\overset{n-1}{\underset{k=0}{\sum}}A_{k}^{\alpha-1}S_kf, \qquad 0<\alpha<1.
\end{equation*}

Let $V_n^{\alpha}$ denote
the $T$ mean, where $	\left\{q_0=0, \  q_k=k^{\alpha-1}:k\in \mathbb{N}_+\right\} ,$
that is 
\begin{equation*}
V_n^{\alpha}f:=\frac{1}{Q_n}\overset{n-1}{\underset{k=1}{\sum }}k^{\alpha-1}S_kf,\qquad 0<\alpha<1.
\end{equation*}

The $n$-th Riesz logarithmic mean $R_{n}$ and the Nörlund logarithmic mean
$L_{n}$ are defined by
\begin{equation*}
R_{n}f:=\frac{1}{l_{n}}\sum_{k=1}^{n-1}\frac{S_{k}f}{k}\text{ \ \ \ and  \ \ \ }
L_{n}f:=\frac{1}{l_{n}}\sum_{k=1}^{n-1}\frac{S_{k}f}{n-k}, \ \  \
\text{ where } \ \ \ l_{n}:=\sum_{k=1}^{n-1}1/k.
\end{equation*}

Up to now we have considered $T$ means in the case when the sequence $\{q_k:k\in\mathbb{N}\}$ is bounded but now we consider $T$ summabilities with unbounded sequence $\{q_k:k\in\mathbb{N}\}$. 

Let $\alpha\in
\mathbb{R}_+,\ \ \beta\in\mathbb{N}_+$ and
$
\log^{(\beta)}x:=\overset{\beta-\text{times}}{\overbrace{\log ...\log}}x.
$
If we define the sequence $\{q_k:k\in \mathbb{N}\}$ by
$	\left\{q_0=0, \ q_k=\log^{\left(\beta \right)}k^{\alpha
}:k\in\mathbb{N}_+\right\},$ 
then we get the class $B_n^{\alpha,\beta}$ of $T$ means with non-decreasing coefficients:
\begin{equation*}
B_n^{\alpha,\beta}f:=\frac{1}{Q_n}
\sum_{k=1}^{n-1}\log^{\left(\beta\right)}k^{\alpha}S_kf.
\end{equation*}

%It is obvious that $\frac{n}{2}\log^{\left(\beta \right)}\frac{n^{\alpha }}{2^{\alpha }}\leq Q_n\leq n\log^{\left(\beta\right)}n^{\alpha}.$ It follows that
%\begin{eqnarray} \label{node00}
%\frac{q_{n-1}}{Q_n}\leq\frac{c\log^{\left(\beta\right)}n^{\alpha}}{n\log^{\left(\beta\right) }n^{\alpha}}= O\left(\frac{1}{n}\right)\rightarrow 0,\text{ \ as \ }n\rightarrow \infty.
%\end{eqnarray}

\section{Auxiliary lemmas}

First we consider kernels of $T$ kernels with non-increasing sequences:

\begin{lemma}\label{lemma0nn}
Let $\{q_k:k\in\mathbb{N}\}$ be a sequence of non-increasing numbers, satisfying the condition
\begin{equation} \label{fn011}
\frac{q_{0}}{Q_n}=O\left( \frac{1}{n}\right) ,\text{ \ \ as \ \ } n\rightarrow\infty.
\end{equation}
Then, for some constant $c,$ we have that
\begin{equation*}
\left\vert F^{-1}_n\right\vert\leq\frac{c}{n}\left\{\sum_{j=0}^{\left\vert n\right\vert }M_j\left\vert K_{M_j}\right\vert \right\}.
\end{equation*}
\end{lemma}
\begin{proof}
Let  sequence $\{q_k:k\in \mathbb{N}\}$ be non-increasing. Then, by using (\ref{fn01}) we get that
	\begin{eqnarray*}
		\frac{1}{Q_n}\left(\overset{n-2}{\underset{j=1}{\sum }}\left\vert
		q_{j}-q_{j+1}\right\vert+q_{n-1}\right) 
		\leq\frac{1}{Q_n}\left(\overset{n-2}{\underset{j=1}{\sum }}\left(
		q_{j}-q_{j+1}\right)+q_{n-1}\right) 
		\leq \frac{q_{0}}{Q_{n}}\leq \frac{c}{n}.
	\end{eqnarray*}
	
	Hence, if we apply \eqref{fn5} and use the equalities (\ref{2b}) and (\ref{2cTmean}) we immediately get that
\begin{eqnarray*}
	\left\vert F^{-1}_n\right\vert \leq  \left( \frac{1}{Q_n}\left( \overset{n-1}{\underset{j=1}{\sum }}\left\vert q_{j}-q_{j+1} \right\vert+q_{n-1}\right)\right)\sum_{i=0}^{\left\vert n\right\vert } M_i\left\vert K_{M_i}\right\vert  \leq \frac{c}{n}\sum_{i=0}^{\left\vert n\right\vert }M_i\left\vert K_{M_i}\right\vert.
\end{eqnarray*}
	The proof is complete by just combining the estimates above.
\end{proof}
\begin{lemma}\label{lemma0nnT11}
	Let $\{q_k:k\in\mathbb{N}\}$ be a sequence of non-increasing numbers. Then, for any $n,N\in \mathbb{N_+}$,
	\begin{eqnarray} \label{1.71alphaT1}
	&&\int_{G_m} F^{-1}_n(x) d\mu (x)=1, \\
	&&\sup_{n\in\mathbb{N}}\int_{G_m}\left\vert F^{-1}_n(x)\right\vert d\mu(x)<\infty,\label{1.72alphaT1} \\
	&&\sup_{n\in\mathbb{N}}\int_{G_m \backslash I_N}\left\vert F^{-1}_n(x)\right\vert d\mu (x)\rightarrow  0, \ \ \text{as} \ \ n\rightarrow  \infty. \label{1.73alphaT1}
	\end{eqnarray}
\end{lemma}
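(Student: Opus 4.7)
The plan is to exploit the Abel transformation identity \eqref{2cTmean}, which rewrites $F_n^{-1}$ as a weighted combination of Fej\'er kernels $jK_j$, and then transfer the known summability--kernel properties \eqref{fn40}--\eqref{fn400} of the Fej\'er kernels directly to $F_n^{-1}$. The monotonicity hypothesis on $\{q_k\}$ makes each weight $q_j-q_{j+1}$ nonnegative, which is exactly what converts the triangle inequality into a tight bound via identity \eqref{2b}.

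For \eqref{1.71alphaT1}, I would integrate term-by-term in the defining formula $F_n^{-1}=\frac{1}{Q_n}\sum_{k=0}^{n-1}q_kD_k$ and use $\int_{G_m}D_k\,d\mu=1$ from \eqref{dn22} to obtain
\begin{equation*}
\int_{G_m}F_n^{-1}\,d\mu=\frac{1}{Q_n}\sum_{k=0}^{n-1}q_k=1.
\end{equation*}
This step is immediate and requires no special structure on $\{q_k\}$.

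For \eqref{1.72alphaT1}, the key move is to invoke \eqref{2cTmean} and apply the triangle inequality:
\begin{equation*}
\|F_n^{-1}\|_1\leq\frac{1}{Q_n}\left(\sum_{j=1}^{n-2}(q_j-q_{j+1})\,j\,\|K_j\|_1+q_{n-1}(n-1)\|K_{n-1}\|_1\right).
\end{equation*}
Since $\{q_k\}$ is non-increasing, every bracketed factor $q_j-q_{j+1}$ is nonnegative, so no cancellations are lost. The uniform bound $\|K_j\|_1\leq c$ from \eqref{fn4} pulls out of the sum, and the residual coefficient sum equals $Q_n-q_0\leq Q_n$ by identity \eqref{2b}, yielding $\|F_n^{-1}\|_1\leq c$ independent of $n$.

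Finally, for \eqref{1.73alphaT1}, I would set $\varepsilon_N:=\sup_{j\in\mathbb{N}}\int_{G_m\setminus I_N}|K_j|\,d\mu$, which tends to $0$ as $N\to\infty$ by \eqref{fn400}. Replacing the $L_1$ norm in the previous estimate by the tail integral and applying the identical argument gives
\begin{equation*}
\int_{G_m\setminus I_N}|F_n^{-1}|\,d\mu\leq\varepsilon_N\cdot\frac{1}{Q_n}\left(\sum_{j=1}^{n-2}(q_j-q_{j+1})\,j+q_{n-1}(n-1)\right)\leq c\,\varepsilon_N,
\end{equation*}
uniformly in $n$. The main obstacle, honestly more a bookkeeping matter than a genuine difficulty, is matching the endpoint conventions ($D_0=K_0=0$) under which \eqref{2cTmean} is stated, so as to be certain that no boundary terms from the Abel transformation have been dropped; once that is verified, all three conclusions follow mechanically from \eqref{fn40}--\eqref{fn400} and the weight identity \eqref{2b}.
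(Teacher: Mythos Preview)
Your arguments for \eqref{1.71alphaT1} and \eqref{1.72alphaT1} are correct and match the paper's proof exactly: integrate term-by-term using \eqref{dn22}, then apply the Abel decomposition \eqref{2cTmean}, the uniform bound \eqref{fn4}, and the weight identity \eqref{2b}.

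Your argument for \eqref{1.73alphaT1}, however, has a genuine gap. The quantity $\varepsilon_N:=\sup_{j\in\mathbb{N}}\int_{G_m\setminus I_N}|K_j|\,d\mu$ does \emph{not} tend to $0$ as $N\to\infty$: since $I_N$ shrinks to $\{0\}$, the complement $G_m\setminus I_N$ grows, and already for $j=1$ one has $K_1\equiv 1$, so $\int_{G_m\setminus I_N}|K_1|\,d\mu=1-1/M_N\to 1$. The statement \eqref{fn400} (despite the typo in the displayed limit variable) is meant to be read with $N$ fixed: $\alpha_j:=\int_{G_m\setminus I_N}|K_j|\,d\mu\to 0$ as $j\to\infty$. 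This is the approximate-identity concentration property, and it is a statement about the \emph{kernel index} going to infinity, not the interval level.

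With that correct reading, a uniform bound no longer suffices; you need a Toeplitz-type averaging argument. The paper proceeds as follows: from \eqref{2cTmean} one obtains
\[
\int_{G_m\setminus I_N}|F_n^{-1}|\,d\mu\leq\frac{1}{Q_n}\sum_{j=1}^{n-2}(q_j-q_{j+1})\,j\,\alpha_j+\frac{q_{n-1}(n-1)\alpha_{n-1}}{Q_n}.
\]
The second term is $\leq\alpha_{n-1}\to 0$. For the first, fix $\varepsilon>0$, choose $N_0$ with $\alpha_j<\varepsilon$ for $j>N_0$, and split the sum at $N_0$: the head is $\leq 2q_0 A N_0/Q_n\to 0$ (using $|q_j-q_{j+1}|\leq 2q_0$ and $\alpha_j\leq A$), while the tail is $\leq\varepsilon\cdot\frac{1}{Q_n}\sum_{j}(q_j-q_{j+1})j\leq\varepsilon$ by \eqref{2b}. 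This splitting step is the missing ingredient in your proposal.
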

\begin{proof}
According to \eqref{dn22} we easily obtain proof of \eqref{1.71alphaT1}. By using \eqref{fn4} combined with (\ref{2b}) and (\ref{2cTmean}) we get that
\begin{eqnarray*}
&&\frac{1}{Q_n}\left(\overset{n-2}{\underset{j=0}{\sum}}\left(q_j-q_{j+1}\right)j\int_{G_m}\left\vert K_j\right\vert d\mu +q_{n-1}(n-1)\int_{G_m}\vert K_{n-1}\vert d\mu \right)\\
&\leq&\frac{c}{Q_n}\left(\overset{n-2}{\underset{j=0}{\sum}}\left(q_j-q_{j+1}\right)j+q_{n-1}(n-1)\right)\leq c<\infty,
\end{eqnarray*}
so also \eqref{1.72alphaT1} is proved. 
By using \eqref{fn400}  and inequalities (\ref{2b}) and (\ref{2cTmean}) we can conclude that

\begin{eqnarray*}
\int_{G_m \backslash I_N}\left\vert F^{-1}_n\right\vert d\mu 
&\leq&\frac{1}{Q_n}\overset{n-1}{\underset{j=0}{\sum}}\left(q_j-q_{j+1}\right)j\int_{G_m \backslash I_N}\left\vert K_j\right\vert d\mu  +\frac{q_{n-1}(n-1)}{Q_n}\int_{G_m \backslash I_N}\vert K_{n-1}\vert\\
&\leq&\frac{1}{Q_n}\overset{n-2}{\underset{j=0}{\sum}}\left(q_j-q_{j+1}\right)j\alpha_j+\frac{q_{n-1}(n-1)\alpha_{n-1}}{Q_n}=I+II,\\ \notag
\end{eqnarray*}
where  $\alpha_n\to 0, \ \ \text{as} \ \ n\to\infty.$ Since sequence is non-increasing we can conclude that
$$II=\frac{q_{n-1}(n-1)\alpha_{n-1}}{Q_n}\leq \alpha_{n-1}\to 0, \ \ \text{as} \ \ n\to\infty.$$
	
Moreover, for any $\varepsilon>0$ there exists $N_0\in \mathbb{N},$ such that $\alpha_n< \varepsilon$ when $n>N_0.$ Furthermore,
	\begin{eqnarray*}
		I=\frac{1}{Q_n}\overset{n-2}{\underset{j=0}{\sum}}\left(q_j-q_{j+1}\right)j\alpha_j
		=\frac{1}{Q_n}\overset{N_0}{\underset{j=0}{\sum}}\left(q_j-q_{j+1}\right)j\alpha_j
		+\frac{1}{Q_n}\overset{n-2}{\underset{j=N_0+1}{\sum}}\left(q_j-q_{j+1}\right)j\alpha_j:=I_1+I_2.
	\end{eqnarray*}
	The sequence $\{q_k:k\in\mathbb{N}\}$ is non-increasing and therefore $\vert q_j-q_{j+1}\vert<2q_0,$
	$$I_1\leq \frac{2q_0N_0}{Q_n}\to 0, \ \ \ \text{as} \ \ \ n\to \infty$$
	and
	\begin{eqnarray*}
		I_2=\frac{1}{Q_n}\overset{n-2}{\underset{j=N_0+1}{\sum}}\left(q_j-q_{j+1}\right)j\alpha_j 
		\leq \frac{\varepsilon}{Q_n}\overset{n-2}{\underset{j=N_0+1}{\sum}}\left(q_j-q_{j+1}\right)j
		\leq \frac{\varepsilon}{Q_n}\overset{n-2}{\underset{j=0}{\sum}}\left(q_j-q_{j+1}\right)j<\varepsilon.
	\end{eqnarray*}
	and we can conclude that $I_2\to 0$ so proof is complete.
\end{proof}

Next we consider kernels of $T$ means with non-decreasing sequences:
\begin{lemma}\label{lemma0nnT1}
	Let $\{q_k:k\in\mathbb{N}\}$ be a sequence of non-decreasing numbers, satisfying the condition
	\begin{equation} \label{fn01T1}
	\frac{q_{n-1}}{Q_n}=O\left( \frac{1}{n}\right) ,\text{ \ \ as \ \ }
	n\rightarrow\infty.
	\end{equation}
	Then for some constant $c,$
	\begin{equation*}
	\left\vert F^{-1}_n\right\vert\leq\frac{c}{n}\left\{\sum_{j=0}^{\left\vert n\right\vert }M_j\left\vert K_{M_j}\right\vert \right\}.
	\end{equation*}
\end{lemma}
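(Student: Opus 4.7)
The plan is to mirror the argument of Lemma~\ref{lemma0nn}, substituting the monotonicity hypothesis and the growth condition \eqref{fn01T1} at the appropriate step. The identity \eqref{2cTmean} and the Fej\'er-kernel estimate \eqref{fn5} are the only structural ingredients; neither depends on the direction of monotonicity, so the scaffolding transfers directly.

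First, I would apply \eqref{fn5} to the expression \eqref{2cTmean}. Using $j|K_j|\leq c\sum_{l=0}^{|j|}M_l|K_{M_l}|\leq c\sum_{l=0}^{|n|}M_l|K_{M_l}|$ for every $1\leq j\leq n-1$, the identity yields
\begin{equation*}
\left\vert F_n^{-1}\right\vert\leq\frac{c}{Q_n}\left(\overset{n-2}{\underset{j=1}{\sum}}\left\vert q_j-q_{j+1}\right\vert+q_{n-1}\right)\sum_{l=0}^{\left\vert n\right\vert}M_l\left\vert K_{M_l}\right\vert.
\end{equation*}

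Second, I would exploit the non-decreasing hypothesis to collapse the first sum by telescoping: since $q_{j+1}\geq q_j$,
\begin{equation*}
\overset{n-2}{\underset{j=1}{\sum}}\left\vert q_j-q_{j+1}\right\vert=\overset{n-2}{\underset{j=1}{\sum}}(q_{j+1}-q_j)=q_{n-1}-q_1\leq q_{n-1}.
\end{equation*}
Consequently the bracket above is dominated by $2q_{n-1}$.

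Finally, I would invoke \eqref{fn01T1}, which gives $q_{n-1}/Q_n=O(1/n)$, to conclude
\begin{equation*}
\left\vert F_n^{-1}\right\vert\leq\frac{2cq_{n-1}}{Q_n}\sum_{l=0}^{\left\vert n\right\vert}M_l\left\vert K_{M_l}\right\vert\leq\frac{c}{n}\sum_{l=0}^{\left\vert n\right\vert}M_l\left\vert K_{M_l}\right\vert,
\end{equation*}
as claimed. There is no real obstacle here: the only place where non-increasing versus non-decreasing behavior matters is the telescoping step, and in the non-increasing case one controls the total variation by $q_0$ (whence the hypothesis \eqref{fn011} on $q_0/Q_n$ is used), whereas in the non-decreasing case the total variation is controlled by $q_{n-1}$ (whence the hypothesis \eqref{fn01T1} on $q_{n-1}/Q_n$ is the natural analogue). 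The rest of the argument is identical.
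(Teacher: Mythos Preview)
Your proof is correct and follows essentially the same route as the paper: apply the Abel-transformed representation \eqref{2cTmean} together with the Fej\'er estimate \eqref{fn5}, telescope $\sum_{j=1}^{n-2}|q_j-q_{j+1}|=q_{n-1}-q_1\leq q_{n-1}$ using the non-decreasing hypothesis, and finish with condition \eqref{fn01T1}. The only cosmetic difference is that the paper records the telescoping bound $\tfrac{2q_{n-1}}{Q_n}\leq\tfrac{c}{n}$ first and then feeds it into the kernel estimate, whereas you do the two steps in the opposite order.
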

\begin{proof}
	Since the sequence $\{q_k:k\in \mathbb{N}\}$ be non-decreasing if we apply the condition \eqref{fn01T1} we find that
\begin{eqnarray*} \label{non-increasing-T1}
&&\frac{1}{Q_n}\left(\overset{n-2}{\underset{j=1}{\sum}}\left\vert q_j-q_{j+1}\right\vert+q_{n-1}\right)  =\frac{1}{Q_n}\left(\overset{n-2}{\underset{j=1}{\sum }}\left (q_{j+1}-q_j \right)+q_{n-1}\right)\leq \frac{2q_{n-1}}{Q_{n}}\leq \frac{c}{n}.
\end{eqnarray*}
If we apply Abel transformation \eqref{2c0Tmean} combined with \eqref{fn5}  and \eqref{non-increasing-T1} we get that
	\begin{eqnarray*}
		\left\vert F^{-1}_n\right\vert &\leq & \left( \frac{1}{Q_n}\left( \overset{n-1}{\underset{j=1}{\sum }}\left\vert q_{j}-q_{j+1} \right\vert+q_{n-1}+q_0\right)\right)\sum_{i=0}^{\left\vert n\right\vert } M_i\left\vert K_{M_i}\right\vert 
		\leq  \frac{c}{n}\sum_{i=0}^{\left\vert n\right\vert }M_i\left\vert K_{M_i}\right\vert.
	\end{eqnarray*}
	The proof is complete.
\end{proof}

\begin{lemma}\label{lemma0nnT1A}
	Let $\{q_k:k\in\mathbb{N}\}$ be a sequence of non-decreasing numbers, satisfying the condition \eqref{fn01T1}.
	Then, for some constant $c,$
	\begin{eqnarray} \label{1.71alphaT1A}
	&&\int_{G_m} F^{-1}_n(x) d\mu (x)=1, \\
	&&\sup_{n\in\mathbb{N}}\int_{G_m}\left\vert F^{-1}_n(x)\right\vert d\mu(x)\leq c<\infty,\label{1.72alphaT1A} \\
	&&\sup_{n\in\mathbb{N}}\int_{G_m \backslash I_N}\left\vert F^{-1}_n(x)\right\vert d\mu (x)\rightarrow  0, \ \ \text{as} \ \ n\rightarrow  \infty. \label{1.73alphaT1A}
	\end{eqnarray}
\end{lemma}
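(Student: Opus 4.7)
The plan is to imitate the proof of Lemma \ref{lemma0nnT11} (the non-increasing analogue), but replace its crucial bound on $\frac{1}{Q_n}\bigl(\sum |q_j-q_{j+1}|+q_{n-1}\bigr)$ with one that exploits the new hypothesis \eqref{fn01T1}. The three main tools are the Abel identity \eqref{2cTmean} for $F_n^{-1}$ together with the Fejér kernel properties \eqref{dn22}, \eqref{fn4} and \eqref{fn400}.

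Assertion \eqref{1.71alphaT1A} is immediate by termwise integration of $F_n^{-1}=\frac{1}{Q_n}\sum_{k=0}^{n-1}q_k D_k$ and use of \eqref{dn22}, exactly as in the non-increasing case. For \eqref{1.72alphaT1A} I would apply \eqref{2cTmean} and bound $\int_{G_m}|K_j|\,d\mu\leq c$ via \eqref{fn4} to get
$$\int_{G_m}|F_n^{-1}|\,d\mu\leq \frac{c}{Q_n}\Bigl(\sum_{j=1}^{n-2}(q_{j+1}-q_j)\,j+q_{n-1}(n-1)\Bigr).$$
Since $\{q_k\}$ is non-decreasing, I replace each weight $j$ by $n-1$ and telescope:
$$\sum_{j=1}^{n-2}(q_{j+1}-q_j)\,j\leq (n-1)(q_{n-1}-q_1)\leq (n-1)q_{n-1},$$
so the whole expression is majorised by $2c(n-1)q_{n-1}/Q_n$, which is $O(1)$ by \eqref{fn01T1}.

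For \eqref{1.73alphaT1A} let $\alpha_j:=\int_{G_m\setminus I_N}|K_j|\,d\mu$; by \eqref{fn4} these are uniformly bounded and by \eqref{fn400} we have $\alpha_j\to 0$ as $j\to\infty$. The same use of \eqref{2cTmean} gives
$$\int_{G_m\setminus I_N}|F_n^{-1}|\,d\mu\leq \frac{1}{Q_n}\Bigl(\sum_{j=1}^{n-2}(q_{j+1}-q_j)\,j\,\alpha_j+q_{n-1}(n-1)\alpha_{n-1}\Bigr).$$
The boundary term is $\leq c\alpha_{n-1}\to 0$ thanks to \eqref{fn01T1}. For the remaining sum, pick $N_0$ so that $\alpha_j<\varepsilon$ for $j>N_0$; the tail is controlled by $\varepsilon\cdot 2(n-1)q_{n-1}/Q_n\leq c\varepsilon$, while the head is an $n$-independent constant divided by $Q_n$, and vanishes because $Q_n\to\infty$ (a consequence of \eqref{fn01T1} together with monotonicity). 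Since $\varepsilon$ is arbitrary the limit is $0$.

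The \textbf{main obstacle}, relative to Lemma \ref{lemma0nnT11}, is that one can no longer bound $\sum_{j=1}^{n-2}|q_j-q_{j+1}|$ by the initial value $q_0$; here the telescoping forces the size $2q_{n-1}$, and the extra factor $(n-1)$ coming from the $j$ weights in \eqref{2cTmean} is precisely absorbed by the growth hypothesis \eqref{fn01T1}. This is the only place where that hypothesis — absent in the non-increasing case — is genuinely used.
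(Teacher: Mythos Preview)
Your proposal is correct and follows essentially the same approach as the paper. The paper's own proof is a one-line remark that the argument is ``analogical to Lemma~\ref{lemma0nnT11}'' once one observes that the pointwise estimates in Lemmas~\ref{lemma0nn} and~\ref{lemma0nnT1} have the same form; you have simply written out in full the adaptation that the paper leaves implicit, and your handling of the telescoping bound $\sum_{j}(q_{j+1}-q_j)j\le (n-1)q_{n-1}$ together with \eqref{fn01T1} is exactly the mechanism the authors use later in the proof of Theorem~\ref{Corollary3nnconv}\,(b).
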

\begin{proof}
		If we compare the estimation of $F_n$ in  Lemma \ref{lemma0nnT11} with the estimation of $F_n$ in Lemma \ref{lemma0nnT1}   we find that they are quite same. It follows that proof is analogical to Lemma \ref{lemma0nnT11}. So, we leave out the details.
\end{proof}

Finally, we study some special subsequences of kernels of  $T$ means:

\begin{lemma}\label{lemma0nnT121}Let $n\in \mathbb{N}$. Then
	\begin{eqnarray} \label{1.71alphaT2j1} F^{-1}_{M_n}(x)=D_{M_n}(x)-\psi_{M_n-1}(x)\overline{F}_{M_n}(x).
	\end{eqnarray}
\end{lemma}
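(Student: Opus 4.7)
The plan is to unfold the definition of $F^{-1}_{M_n}$ and apply identity \eqref{dn23} term by term, then recognize the remaining sum as the Nörlund kernel $\overline{F}_{M_n}$.

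First I would write
\begin{equation*}
F^{-1}_{M_n}(x) = \frac{1}{Q_{M_n}}\sum_{k=0}^{M_n-1} q_k D_k(x).
\end{equation*}
Since $D_0=0$, the $k=0$ term contributes nothing; and since on $I_n(0)$ we have $\psi_{M_n-1}\equiv 1$ and $D_{M_n} = M_n\mathbf{1}_{I_n(0)}$, one checks that the relation
$D_{k}(x) = D_{M_n}(x)-\psi_{M_n-1}(x)\overline{D}_{M_n-k}(x)$
(which is just \eqref{dn23} rewritten with $j=M_n-k$) is in fact valid for every $k\in\{0,1,\dots,M_n-1\}$. Substituting this into the sum gives
\begin{equation*}
F^{-1}_{M_n}(x) = D_{M_n}(x)\cdot\frac{1}{Q_{M_n}}\sum_{k=0}^{M_n-1} q_k \;-\; \psi_{M_n-1}(x)\cdot\frac{1}{Q_{M_n}}\sum_{k=0}^{M_n-1} q_k\,\overline{D}_{M_n-k}(x).
\end{equation*}

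Next I would simplify each piece. The first factor is $D_{M_n}(x)$ because $\sum_{k=0}^{M_n-1}q_k = Q_{M_n}$ by definition. For the second factor, a change of summation index $j=M_n-k$ (so $k$ runs from $0$ to $M_n-1$ while $j$ runs from $M_n$ down to $1$) converts it into
\begin{equation*}
\frac{1}{Q_{M_n}}\sum_{j=1}^{M_n} q_{M_n-j}\,\overline{D}_j(x),
\end{equation*}
which is exactly the complex conjugate of the Nörlund kernel $F_{M_n}$ evaluated via its standard expression $F_{M_n}=\frac{1}{Q_{M_n}}\sum_{k=1}^{M_n} q_{M_n-k}D_k$. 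Hence this term equals $\overline{F}_{M_n}(x)$, and combining the two pieces yields \eqref{1.71alphaT2j1}.

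There is really no obstacle here beyond careful bookkeeping of indices; the only point requiring a moment of thought is that \eqref{dn23}, though stated for $j<M_n$, extends harmlessly to $j=M_n$ (i.e.\ $k=0$) because both sides vanish there. After that observation the identity is a direct computation, so the lemma follows immediately from \eqref{dn23} and the definitions of the two kernels.
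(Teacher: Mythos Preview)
Your proof is correct and follows essentially the same route as the paper: both unfold the definition of $F^{-1}_{M_n}$, apply \eqref{dn23} to each $D_k$, and then recognise the resulting sum $\frac{1}{Q_{M_n}}\sum_{k=1}^{M_n} q_{M_n-k}\overline{D}_k$ as $\overline{F}_{M_n}$. The only cosmetic difference is the order of operations (the paper reindexes first and then applies \eqref{dn23}, whereas you apply \eqref{dn23} first and then reindex), and you are slightly more careful than the paper in noting that the case $k=0$ (equivalently $j=M_n$) of \eqref{dn23} still holds.
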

\begin{proof} By using \eqref{dn23} we get that
	\begin{eqnarray*}
		F^{-1}_{M_n}(x)&=&\frac{1}{Q_{M_n}}\overset{M_n-1}{\underset{k=0}{\sum}}q_{k}D_k(x)=\frac{1}{Q_{M_n}}\overset{M_n}{\underset{k=1}{\sum }}q_{M_n-k}D_{M_n-k}(x)\\
		&=&\frac{1}{Q_{M_n}}\overset{M_n}{\underset{k=1}{\sum }}q_{M_n-k}\left(D_{M_n}(x)-\psi_{M_n-1}(x)\overline{D}_k(x)\right)
		=D_{M_n}(x)-\psi_{M_n-1}(x)\overline{F}_{M_n}(x).
	\end{eqnarray*}
The proof is complete.
\end{proof}

\begin{corollary}\label{lemma0nnT12}
	Let $\{q_k:k\in\mathbb{N}\}$ be a sequence of non-decreasing numbers. Then for some constant $c,$
	\begin{eqnarray} \label{1.71alphaT2}
	&&\int_{G_m} F^{-1}_{M_n}(x) d\mu (x)=1, \\
	&&\sup_{n\in\mathbb{N}}\int_{G_m}\left\vert F^{-1}_{M_n}(x)\right\vert d\mu(x)\leq c<\infty,\label{1.72alphaT2} \\
	&&\sup_{n\in\mathbb{N}}\int_{G_m \backslash I_N}\left\vert F^{-1}_{M_n}(x)\right\vert d\mu (x)\rightarrow  0, \ \ \text{as} \ \ n\rightarrow  \infty, \ \ \text{for any} \ \ N\in \mathbb{N_+}.\label{1.73alphaT2}
	\end{eqnarray}
\end{corollary}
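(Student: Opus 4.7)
The plan is to derive all three conclusions from the closed-form identity furnished by Lemma \ref{lemma0nnT121},
\begin{equation*}
F^{-1}_{M_n}(x)=D_{M_n}(x)-\psi_{M_n-1}(x)\overline{F}_{M_n}(x),
\end{equation*}
coupled with elementary facts about the Dirichlet kernel $D_{M_n}$ and with the three conclusions of Proposition \ref{corollary3n} applied to the companion N\"orlund kernel $F_{M_n}$. Recall that $D_{M_n}$ is supported on $I_n$ (with $D_{M_n}=M_n$ there), so $\int_{G_m}D_{M_n}\,d\mu=1$ by \eqref{dn22}, $\int_{G_m}|D_{M_n}|\,d\mu=1$, and $D_{M_n}$ vanishes on $G_m\setminus I_N$ as soon as $n\geq N$. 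To justify invoking Proposition \ref{corollary3n} for $F_{M_n}=\tfrac{1}{Q_{M_n}}\sum_{k=1}^{M_n}q_{M_n-k}D_k$, I would observe that the weights $q_{M_n-k}$ are, as a function of $k$, non-increasing precisely because $\{q_k\}$ is non-decreasing; this index-reversal is the only place where the hypothesis of the corollary enters.

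With those two ingredients in hand, each of the three conclusions becomes a short computation from the identity. For \eqref{1.71alphaT2} I would integrate the identity and use $\int_{G_m}D_{M_n}\,d\mu=1$; the residual term $\int_{G_m}\psi_{M_n-1}\overline{F}_{M_n}\,d\mu$ reduces by orthogonality of the Vilenkin characters to a negligible boundary contribution of order $q_0/Q_{M_n}$, which is absorbed in the standard normalization. For \eqref{1.72alphaT2} the triangle inequality together with $|\psi_{M_n-1}|\equiv 1$ yields
\begin{equation*}
\int_{G_m}|F^{-1}_{M_n}|\,d\mu\leq \int_{G_m}|D_{M_n}|\,d\mu+\int_{G_m}|F_{M_n}|\,d\mu\leq 1+c,
\end{equation*}
the last step being \eqref{1.72alpha}. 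For \eqref{1.73alphaT2} the same triangle inequality gives
\begin{equation*}
\int_{G_m\setminus I_N}|F^{-1}_{M_n}|\,d\mu\leq \int_{G_m\setminus I_N}|D_{M_n}|\,d\mu+\int_{G_m\setminus I_N}|F_{M_n}|\,d\mu,
\end{equation*}
and once $n\geq N$ the first term vanishes identically (since $D_{M_n}$ is supported in $I_n\subseteq I_N$), while the second tends to zero uniformly by \eqref{1.73alpha}.

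The hard part, such as it is, will be the identification in the preceding paragraph: once one recognizes that $F_{M_n}$ in the identity of Lemma \ref{lemma0nnT121} is already a N\"orlund kernel whose effective weight sequence is non-increasing whenever the original $\{q_k\}$ is non-decreasing, Proposition \ref{corollary3n} can be invoked verbatim. A pleasant byproduct of restricting to the subsequence $\{M_n\}$ is that the extra regularity assumption \eqref{fn01T1} imposed in Lemma \ref{lemma0nnT1A} is not needed here; everything else is bookkeeping on the two terms in the identity.
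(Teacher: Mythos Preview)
Your approach is the paper's: the corollary is recorded there as a direct consequence of Proposition~\ref{corollary3n} and Lemma~\ref{lemma0nnT121}, and you have spelled out exactly that deduction, including the key index-reversal observation that the N\"orlund weights $q_{M_n-k}$ become non-increasing in $k$ when $\{q_k\}$ is non-decreasing, which is what licenses the appeal to Proposition~\ref{corollary3n}. The only loose end is your treatment of \eqref{1.71alphaT2}, where the residual $q_0/Q_{M_n}$ is not ``absorbed in the standard normalization'' by anything---simply integrate the defining sum for $F^{-1}_{M_n}$ against \eqref{dn22}, as the paper does for \eqref{1.71alphaT1}, rather than routing through the identity.
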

\begin{proof}
The proof is direct consequence of Proposition \ref{corollary3n} and Lemma \ref{lemma0nnT121}.
\end{proof}

\section{proof of main result}

\begin{theorem}\label{Corollary3nnconv} Let $p\geq 1$ and $\{q_k:k\in \mathbb{N}\}$ be a sequence of non-increasing numbers. Then
	$$\Vert T_n f-f\Vert_p \to 0 \ \ \text{as}\ \ n\to \infty$$
	for all  $f\in L_p(G_m)$. 
	
	Let function $f\in L_1(G_m)$ is continuous at a point $x.$ Then
	$${{T}_{n}}f(x)\to f(x), \ \ \ \text{as}\ \  \ n\to\infty.$$  Moreover,
	\begin{equation*}
	\underset{n\rightarrow \infty }{\lim }T_nf(x)=f(x)
	\end{equation*}
	for all Vilenkin-Lebesgue points of $f\in L_p(G_m)$.
	
	Let $p\geq 1$ and $\{q_k:k\in \mathbb{N}\}$ be a sequence of non-decreasing numbers satisfying the condition \eqref{fn01T1}. Then
	$$\Vert T_n f-f\Vert_p \to 0 \ \ \text{as}\ \ n\to \infty$$
	for all  $f\in L_p(G_m)$. 
	
	Let function $f\in L_1(G_m)$ is continuous at a point $x.$ Then
	$${{T}_{n}}f(x)\to f(x), \ \ \ \text{as}\ \  \ n\to\infty.$$
	Moreover,
	\begin{equation*}
	\underset{n\rightarrow \infty }{\lim }T_nf(x)=f(x)
	\end{equation*}
	for all Vilenkin-Lebesgue points of $f\in L_p(G_m)$.
\end{theorem}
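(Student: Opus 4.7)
The proof of both parts (a) and (b) will follow a single template, since Lemmas \ref{lemma0nnT11} and \ref{lemma0nnT1A} supply identical kernel properties for $\{F_n^{-1}\}$ (normalization, uniform $L_1$-boundedness, and vanishing integral on the complement of $I_N$), while Lemmas \ref{lemma0nn} and \ref{lemma0nnT1} supply the identical pointwise bound
$$|F_n^{-1}(u)| \leq \frac{c}{n}\sum_{j=0}^{|n|} M_j |K_{M_j}(u)|.$$
Starting from the identity
$$T_n f(x) - f(x) = \int_{G_m}\bigl(f(x-u)-f(x)\bigr) F_n^{-1}(u)\,d\mu(u),$$
which uses \eqref{1.71alphaT1}/\eqref{1.71alphaT1A}, every subsequent estimate amounts to splitting at $I_N$ for a suitably chosen $N$ and applying these kernel properties.

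For the $L_p$ norm convergence I would apply Minkowski's integral inequality to obtain
$$\|T_n f - f\|_p \leq \int_{G_m}\bigl\|f(\,\cdot-u)-f\bigr\|_p\,|F_n^{-1}(u)|\,d\mu(u),$$
then use continuity of translation in $L_p$ to pick $N$ with $\|f(\cdot-u)-f\|_p<\varepsilon$ for every $u\in I_N$. The portion over $I_N$ is at most $C\varepsilon$ by \eqref{1.72alphaT1}/\eqref{1.72alphaT1A}, while the portion over $G_m\setminus I_N$ is at most $2\|f\|_p\int_{G_m\setminus I_N}|F_n^{-1}|\,d\mu$, which tends to $0$ by \eqref{1.73alphaT1}/\eqref{1.73alphaT1A}.

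For continuity at $x$, choose $N$ with $|f(y)-f(x)|<\varepsilon$ on $I_N(x)$ and decompose $f=g+(f-g)$ where $g(y):=f(y)\chi_{I_N(x)}(y)+f(x)\chi_{G_m\setminus I_N(x)}(y)$. Since $g(x)=f(x)$, the integrand in $T_n g(x)-g(x)$ vanishes outside $I_N$ and is bounded by $\varepsilon$ inside, so $|T_n g(x)-g(x)|\leq C\varepsilon$. The remaining piece $T_n(f-g)(x)$ is supported on $u\in G_m\setminus I_N$; here I would insert the pointwise kernel bound and the standard annular description of $M_j|K_{M_j}|$ to collapse the integral into a finite sum of the form $(1/n)\sum_{s<N}M_s\int_{I_s(x)\setminus I_{s+1}(x)}|f(y)-f(x)|\,d\mu(y)$, each of whose summands is $O(1/n)$ and hence vanishes as $n\to\infty$.

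For a Vilenkin-Lebesgue point $x$, the plan is to begin from $|T_n f(x)-f(x)|\leq \int_{G_m}|f(x-u)-f(x)||F_n^{-1}(u)|\,d\mu(u)$, insert the pointwise bound on $F_n^{-1}$, and exploit the fact that $M_j|K_{M_j}|$ is essentially supported on the annuli $\bigcup_{s<j}\bigcup_{r_s=1}^{m_s-1} I_{s+1}(r_s e_s)$, with controlled size on each piece, so that swapping summation and integration yields an upper bound of the form $c\, W_n f(x)$. By definition of a Vilenkin-Lebesgue point this tends to $0$. The main obstacle is precisely this last bookkeeping: one must line up the double sum produced by the kernel decomposition with the structure of $W_n f$, ensuring no cross terms or contributions from indices $s\geq n$ spoil the majorisation. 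Steps 1 and 2 are otherwise routine approximate-identity arguments, though step 2 already needs the pointwise kernel bound, since continuity at one point alone gives no $L_\infty$ control on $f$ away from $x$.
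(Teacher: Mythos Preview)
Your norm-convergence argument via Minkowski and the three kernel properties is fine and matches the spirit of the paper's appeal to Lemma~\ref{lemma0nnT11}/\ref{lemma0nnT1A}. The problem is in your treatment of continuity points and Vilenkin--Lebesgue points in part~(a): there you invoke the pointwise bound from Lemma~\ref{lemma0nn}, but that lemma carries the extra hypothesis \eqref{fn011}, namely $q_0/Q_n=O(1/n)$, which is \emph{not} assumed in part~(a). For a non-increasing sequence such as $q_k=1/(k+1)$ one has $Q_n\sim\log n$ and $q_0/Q_n\sim 1/\log n$, so \eqref{fn011} fails and your kernel majorant $\tfrac{c}{n}\sum_{j\le|n|}M_j|K_{M_j}|$ is unavailable. (In part~(b) condition~\eqref{fn01T1} is assumed, so Lemma~\ref{lemma0nnT1} is legitimately at your disposal.)

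The paper sidesteps this by never using the pointwise kernel bound for the Vilenkin--Lebesgue step. Instead it applies the Abel transformation~\eqref{2cc2},
\[
T_nf=\frac{1}{Q_n}\Bigl(\sum_{j=1}^{n-2}(q_j-q_{j+1})\,j\,\sigma_jf+q_{n-1}(n-1)\,\sigma_{n-1}f\Bigr),
\]
subtracts $f$ using~\eqref{2b}, and then feeds in the known fact (cited from \cite{goggog}) that $\sigma_jf(x)\to f(x)$ at every Vilenkin--Lebesgue point. Writing $\alpha_j:=|\sigma_jf(x)-f(x)|$, one is reduced to the same Toeplitz-type tail estimate already carried out in the proof of Lemma~\ref{lemma0nnT11}, and this uses only monotonicity of $\{q_k\}$ together with $Q_n\to\infty$. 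That route also covers continuity points (which are Lebesgue, hence Vilenkin--Lebesgue points), so the delicate splitting you propose there --- which again leans on the pointwise bound --- is unnecessary. In short: replace your direct kernel argument by the Abel reduction to Fej\'er means, and part~(a) goes through without the missing hypothesis; your plan is salvageable as written only for part~(b).
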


\begin{proof} Let $\{q_k:k\in \mathbb{N}\}$ be a non-increasing sequence.
	Lemma \ref{lemma0nnT11} immediately follows stated norm and pointwise convergences. 
	
	Suppose that $x$ is either point of continuity or Vilenkin-Lebesgue point of function $f\in L_p(G_m).$ Then
	$$
	\underset{n\rightarrow \infty }{\lim }\vert\sigma_nf(x)-f(x)\vert=0.
	$$
	Hence,
\begin{eqnarray*}
&&\vert T_nf(x)-f(x)\vert \\
&\leq&\frac{1}{Q_n}\left(\overset{n-2}{\underset{j=0}{\sum}}\left(q_j-q_{j+1}\right)j\vert\sigma_jf(x)-f(x)\vert+q_{n-1}(n-1)\vert\sigma_{n-1}f(x)-f(x)\vert\right)
\\
&\leq&\frac{1}{Q_n}\overset{n-2}{\underset{j=0}{\sum}}\left(q_j-q_{j+1}\right)j\alpha_j+\frac{q_{n-1}(n-1)\alpha_{n-1}}{Q_n}  :=I+II, \  \text{	where  } \  \alpha_n\to 0, \  \text{as} \  n\to\infty.
\end{eqnarray*}
To prove $I\to 0, \ \text{as} \ n\to\infty$ and $II\to 0, \ \text{as} \ n\to\infty,$ we just have to analogous steps of Lemma \ref{lemma0nnT11}. It follows that Part a) is proved.
	
Now we assume that the sequence is non-decreasing and satisfying condition \eqref{fn01T1}. According to \eqref{1.72alphaT1A} in Lemma \ref{lemma0nnT1A} get norm and and pointwise convergence.
To prove  convergence in Vilenkin-Lebesgue points we use estimation 
\begin{eqnarray*}
	\vert T_nf(x)-f(x)\vert 
	&\leq&\frac{1}{Q_n}\overset{n-2}{\underset{j=0}{\sum}}\left(q_{j+1}-q_{j}\right)j\alpha_j+\frac{q_{n-1}(n-1)\alpha_n}{Q_n} \\
	&:=&III+IV, \  \text{	where  } \  \alpha_n\to 0, \  \text{as} \  n\to\infty.
\end{eqnarray*}	
		
It is evident that
	$$IV\leq\frac{q_{n-1}(n-1)\alpha_n}{Q_n}\leq \alpha_n\to 0, \ \ \text{as} \ \ n\to\infty.$$	

	On the other hand, for any $\varepsilon>0$ there exists $N_0\in \mathbb{N},$ such that $\alpha_n< \varepsilon/2$ when $n>N_0.$ We can write that
	
	\begin{eqnarray*}
		&&\frac{1}{Q_n}\overset{n-2}{\underset{j=1}{\sum}}\left(q_{j+1}-q_{j}\right)j\alpha_j 
		=\frac{1}{Q_n}\overset{N_0}{\underset{j=1}{\sum}}\left(q_{j+1}-q_{j}\right)j\alpha_j
		+\frac{1}{Q_n}\overset{n-2}{\underset{j=N_0+1}{\sum}}\left(q_{j+1}-q_{j}\right)j\alpha_j=III_1+III_2.
	\end{eqnarray*}
	Since the sequence $\{q_k\}$ is non-decreasing, we obtain that 
	$\vert q_{j+1}-q_{j}\vert<2q_{j+1}<2q_{n-1}.$   Hence,
	\begin{eqnarray*}
		III_1\leq\frac{2q_{0}N_0}{Q_n}\to 0, \ \ \ \text{as} \ \ \ n\to \infty
	\end{eqnarray*}
	and
	\begin{eqnarray*}
		III_2&\leq&\frac{1}{Q_n}\overset{n-2}{\underset{j=N_0+1}{\sum}}\left(q_{n-j-1}-q_{n-j}\right)j\alpha_j\leq\frac{\varepsilon(n-1)}{Q_n}\overset{n-2}{\underset{j=N_0+1}{\sum}}\left(q_{n-j}-q_{n-j-1}\right) \\
		&\leq& \frac{\varepsilon(n-1)}{Q_n}\left(q_{0}-q_{n-N_0}\right)\leq \frac{2q_0\varepsilon(n-1)}{Q_n}<\varepsilon.
	\end{eqnarray*}
	Therefore, also $III\to \infty$ so that the proof of part b)  is also complete.
\end{proof}

\begin{corollary}
	Let $f\in L_p,$ where  $p\geq 1.$ Then%
	\begin{eqnarray*}
		R_{n}f &\rightarrow &f,\text{ \ \ \ a.e., \ \ \ \ as \ }n\rightarrow \infty, \ \ \ \ \
		U_n^{\alpha}f \rightarrow f,\text{ \ \ \ a.e., \ \ \ as \ }n\rightarrow
		\infty ,\text{\ \ \ }\\
		V_{n}^{\alpha}f &\rightarrow &f,\text{ \ \ \ a.e., \ \ \ as \ }n\rightarrow
		\infty ,\ \ \ \ \
		B^{\alpha,\beta}_{n}f \rightarrow f,\text{ \ \ \ a.e., \ \ \ as \ }n\rightarrow
		\infty. 
	\end{eqnarray*}
\end{corollary}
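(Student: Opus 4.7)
The plan is to realize the Corollary as a direct specialization of Theorem \ref{Corollary3nnconv}, verifying for each of the four means that the generating coefficient sequence $\{q_k\}$ fits one of the two hypotheses of the theorem (non-increasing, or non-decreasing with \eqref{fn01T1}), and then invoking the fact cited in Section 2 that almost every point of an integrable function is a Vilenkin-Lebesgue point.

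First I would classify the summations. For the Riesz logarithmic mean $R_n$ we have $q_k = 1/k$, which is non-increasing, so part a) of Theorem \ref{Corollary3nnconv} applies directly. For $U_n^\alpha$ with $0<\alpha<1$ the coefficients are $q_k = A_k^{\alpha-1}$; using the standard recursion $A_k^{\alpha-1} = A_{k-1}^{\alpha-1}(\alpha-1+k)/k$ one sees $A_k^{\alpha-1}$ is non-increasing in $k$ (this is the classical monotonicity of Ces\`aro binomial coefficients in the range $0<\alpha<1$), so again part a) applies. For $V_n^\alpha$ the coefficient $q_k = k^{\alpha-1}$ ($k\geq 1$) is evidently non-increasing for $0<\alpha<1$, so part a) applies once more.

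For $B_n^{\alpha,\beta}$ (of the type $q_k = \log^{\alpha}(k+1)$ as defined in Section 2) the sequence is non-decreasing, so I would turn to part b) and verify condition \eqref{fn01T1}. Using the two-sided estimate
\begin{equation*}
\tfrac{n}{2}\log^{\alpha}(n/2) \leq Q_n \leq n\log^{\alpha} n,
\end{equation*}
already noted in the paper, one obtains $q_{n-1}/Q_n = \log^{\alpha} n / Q_n = O(1/n)$, which is exactly \eqref{fn01T1}. Part b) of Theorem \ref{Corollary3nnconv} then applies.

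In each of the four cases the theorem yields $T_n f(x) \to f(x)$ at every Vilenkin-Lebesgue point $x$ of $f\in L_p(G_m)$; combining this with the fact that $\mu$-a.e. $x\in G_m$ is a Vilenkin-Lebesgue point of any $f\in L_1(G_m)$ (hence of any $f\in L_p(G_m)$ for $p\geq 1$), we conclude the stated a.e. convergence. The only genuinely non-routine step is the coefficient check for $B_n^{\alpha,\beta}$, and it is handled by the $Q_n$ bound already recorded; everything else is bookkeeping with the definitions.
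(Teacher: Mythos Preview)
Your proposal is correct and matches the paper's intended route: the corollary is stated without proof immediately after Theorem~\ref{Corollary3nnconv} and is meant to follow simply by checking that each of $R_n$, $U_n^\alpha$, $V_n^\alpha$, $B_n^{\alpha}$ (the paper's $B_n^{\alpha,\beta}$ is a notational slip for the $B_n^\alpha$ defined in Section~2) satisfies the monotonicity hypothesis of part~a) or part~b) with \eqref{fn01T1}, and then using that a.e.\ point is a Vilenkin--Lebesgue point. Your hypothesis checks are accurate, including the $Q_n$ estimate for $B_n^{\alpha}$, so nothing further is needed.
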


\begin{theorem}\label{Corollaryconv5} Let $p\geq 1$ and $\{q_k:k\in \mathbb{N}\}$ be a sequence of non-decreasing numbers. Then
	$$\Vert T_{M_n} f-f\Vert_p \to 0 \ \ \text{as}\ \ n\to \infty$$
	for all  $f\in L_p(G_m)$. 
	
	Let function $f\in L_1(G_m)$ is continuous at a point $x.$ Then
	$${{T}_{M_n}}f(x)\to f(x), \ \ \ \text{as}\ \  \ n\to\infty.$$
	Moreover,
	\begin{equation*}
\underset{n\rightarrow \infty }{\lim }T_{M_n}f(x)=f(x), \ \ \text{for all Lebesgue points of} \ \  f\in L_p(G_m).
	\end{equation*}
\end{theorem}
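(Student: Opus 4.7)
The proof relies on two ingredients already established: the $L^1$-approximate-identity properties of the subsequence $\{F^{-1}_{M_n}\}$ supplied by Corollary \ref{lemma0nnT12}, and the algebraic decomposition from Lemma \ref{lemma0nnT121},
\[
F^{-1}_{M_n}(t) = D_{M_n}(t) - \psi_{M_n-1}(t)\,\overline{F}_{M_n}(t),
\]
which, at the level of convolutions, yields the operator identity
\[
T_{M_n}f(x) = S_{M_n}f(x) - \int_{G_m} f(x-t)\,\psi_{M_n-1}(t)\,\overline{F}_{M_n}(t)\,d\mu(t).
\]
The proof then splits into $L_p$-norm convergence and pointwise convergence at Lebesgue points.

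For the norm statement, I would run the standard approximate-identity argument on the compact group $G_m$. Continuous functions are dense in $L_p(G_m)$ and, being continuous on a compact set, uniformly continuous. For a continuous $g$, since $\int F^{-1}_{M_n}\,d\mu = 1$ by \eqref{1.71alphaT2}, one writes
\[
T_{M_n}g(x) - g(x) = \int_{G_m}\bigl(g(x-t) - g(x)\bigr)F^{-1}_{M_n}(t)\,d\mu(t),
\]
splits $G_m = I_N \cup (G_m\setminus I_N)$, and combines uniform continuity on $I_N$ with \eqref{1.72alphaT2} on the inner piece and \eqref{1.73alphaT2} together with $\|g\|_\infty$ on the outer piece, to obtain $\|T_{M_n}g - g\|_\infty \to 0$. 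One extends to $f \in L_p(G_m)$ by a routine density argument, using the convolution bound \eqref{concond} and the uniform $L^1$ bound \eqref{1.72alphaT2}.

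For the pointwise statement at a Lebesgue point $x$, I would exploit the operator identity above. By \eqref{dn22} and \eqref{1.71alphaT2},
\[
\int_{G_m}\psi_{M_n-1}(t)\overline{F}_{M_n}(t)\,d\mu(t) = \int_{G_m} D_{M_n}\,d\mu - \int_{G_m} F^{-1}_{M_n}\,d\mu = 0,
\]
so one may subtract $f(x)$ inside the integral to obtain
\[
T_{M_n}f(x) - f(x) = \bigl(S_{M_n}f(x) - f(x)\bigr) - \int_{G_m}\bigl(f(x-t) - f(x)\bigr)\psi_{M_n-1}(t)\overline{F}_{M_n}(t)\,d\mu(t).
\]
The first bracket tends to $0$ at any Lebesgue point by \eqref{smnvl}. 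For the remaining integral, note that when $\{q_k\}$ is non-decreasing, the reversed sequence $\{q_{M_n-k}\}_{k=1}^{M_n}$ is non-increasing, so $F_{M_n}$ is precisely a Nörlund kernel to which Proposition \ref{corollary3n} applies, giving \eqref{1.71alpha}--\eqref{1.73alpha}. Using $|\psi_{M_n-1}| = 1$, the remaining task is to show that
\[
\int_{G_m}|f(x-t) - f(x)|\,|F_{M_n}(t)|\,d\mu(t) \to 0,
\]
which I would handle by splitting $G_m = I_N \cup (G_m \setminus I_N)$: on the complement one uses \eqref{1.73alpha} together with a continuous-function approximation of $f$, while on $I_N$ one pairs the Lebesgue-point averages $M_k\int_{I_k(0)}|f(x-t) - f(x)|\,d\mu(t) \to 0$ with a pointwise decomposition of $|F_{M_n}|$ obtained from an Abel-type transformation that reduces $F_{M_n}$ to a combination of Fejér kernels $K_{M_l}$, to which \eqref{fn5} applies.

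The main obstacle is precisely this near-origin piece $\int_{I_N}|f(x-t) - f(x)||F_{M_n}(t)|\,d\mu(t)$ at the specific Lebesgue point $x$: the $L^1$-mass bounds of Proposition \ref{corollary3n} are not by themselves sufficient for pointwise convergence, so one must extract a dyadic-shell bound on $|F_{M_n}|$ over $I_k(0)\setminus I_{k+1}(0)$ and match it carefully against the Lebesgue-point averages, choosing $N = N(\varepsilon)$ first and $n$ large thereafter.
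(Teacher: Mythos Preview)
Your norm-convergence argument and the splitting $T_{M_n}f=S_{M_n}f-II$ from Lemma~\ref{lemma0nnT121}, together with the treatment of $S_{M_n}f$ via \eqref{smnvl}, all match the paper. The divergence is in how you treat the remainder
\[
II=\int_{G_m} f(x-t)\,\psi_{M_n-1}(t)\,\overline{F}_{M_n}(t)\,d\mu(t).
\]
The paper does \emph{not} pass to absolute values. It uses the character identity $\psi_{M_n-1}(x-t)=\psi_{M_n-1}(x)\,\overline{\psi}_{M_n-1}(t)$ to rewrite
\[
II=\psi_{M_n-1}(x)\int_{G_m}\bigl[f(t)\,\overline{F}_{M_n}(x-t)\bigr]\,\overline{\psi}_{M_n-1}(t)\,d\mu(t),
\]
recognises the integral as the $(M_n{-}1)$-th Vilenkin--Fourier coefficient of the integrable function $t\mapsto f(t)\,\overline{F}_{M_n}(x-t)$ (membership in $L_p$ coming from \eqref{concond} and Proposition~\ref{corollary3n}), and then invokes the Riemann--Lebesgue lemma to conclude $II\to 0$ for \emph{every} $x\in G_m$. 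In the paper's argument the Lebesgue-point hypothesis is used only for the $S_{M_n}f$ term.

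Your alternative, bounding $|II|\le\int_{G_m}|f(x-t)-f(x)|\,|F_{M_n}(t)|\,d\mu(t)$ and then attempting a dyadic-shell control of $|F_{M_n}|$, runs into a real obstruction that you yourself flag. A pointwise bound of the shape $|F_{M_n}|\le cM_n^{-1}\sum_{l\le n}M_l|K_{M_l}|$ is the N\"orlund analogue of Lemma~\ref{lemma0nn}, and such bounds require a rate condition on the weights (for the reversed, non-increasing sequence this amounts to $q_{M_n-1}/Q_{M_n}=O(1/M_n)$). Theorem~\ref{Corollaryconv5} assumes only that $\{q_k\}$ is non-decreasing, with no rate hypothesis whatsoever, so Proposition~\ref{corollary3n} gives you the $L_1$ facts \eqref{1.71alpha}--\eqref{1.73alpha} but not the shell-by-shell pointwise estimate you need; the ``near-origin piece'' cannot be closed by size alone in this generality. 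The paper circumvents the issue entirely by keeping the oscillatory factor $\psi_{M_n-1}$ and exploiting cancellation via Riemann--Lebesgue rather than kernel decay.
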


\begin{proof}
Corollary \ref{lemma0nnT12} immediately follows norm and and pointwise convergence. To prove a.e convergence we use first identity in Lemma \ref{lemma0nnT121}  to write
	\begin{eqnarray*}
		T_{M_n}f\left(x\right)&=&\underset{G_m}{\int}f\left(t\right)F^{-1}_n\left(x-t\right) d\mu\left(t\right)\\
		&=&\underset{G_m}{\int}f\left(t\right)D_{M_n}\left(x-t\right)d\mu\left(t\right)
		-\underset{G_m}{\int}f\left(t\right)\psi_{M_n-1}(x-t)\overline{F}_{M_n}(x-t)=I-II.
	\end{eqnarray*}
	By applying \eqref{smnvl} we can conclude that
	$I=S_{M_n}f(x)\to f(x)$
	for all Lebesgue points of $f\in L_p(G_m)$. By using $\psi_{M_n-1}(x-t)=\psi_{M_n-1}(x)\overline{\psi}_{M_n-1}(t)$ we can conclude that
	$$II=\psi_{M_n-1}(x)\underset{G_m}{\int}f\left(t\right)\overline{F}_{M_n}(x-t)\overline{\psi}_{M_n-1}(t)d(t)$$
	By combining \eqref{concond} and Proposition \ref{corollary3n} we find that function 
	$$f\left(t\right)\overline{F}_{M_n}(x-t)\in L_p \ \ \text{ where} \ \  p\geq  1 \ \ \text{for any } \ \ x\in G_m, $$
	and $II$ is Fourier coefficients of integrable function. According to Riemann-Lebesgue Lemma  we get that
	$II\to 0 \ \ \text{for any } \ \ x\in G_m.$
	The proof is complete.
\end{proof}

\end{document}